\documentclass[12pt]{amsart}
\usepackage{geometry}                % See geometry.pdf to learn the layout options. There are lots.               
\usepackage{amsmath}
\usepackage{amscd}
\usepackage{amsthm}
\geometry{letterpaper}                   % ... or a4paper or a5paper or ... 
\usepackage{graphicx}
\usepackage{amssymb}
\usepackage{epstopdf}
\DeclareGraphicsRule{.tif}{png}{.png}{`convert #1 `dirname #1`/`basename #1 .tif`.png}

\usepackage{amssymb,amsbsy,amsmath,amsfonts,amssymb,amscd}
\usepackage{latexsym}
\usepackage{amsxtra}
\usepackage{amscd}
\usepackage{graphics}

\usepackage{epic}

\theoremstyle{plain}
\newtheorem{Thm}{Theorem}

\newtheorem{Prop}[Thm]{Proposition}

\theoremstyle{definition}

\newtheorem{Expl}[Thm]{Example}

\theoremstyle{Remark}
\newtheorem{Rem}[Thm]{Remark}

\usepackage{color}
\input xy
\xyoption{all}

% script letters: \sX gives script X (actually, caligraphic)

\newcommand\sU{{\mathcal U}}

% short Greeks

\newcommand\al{\alpha}

\newcommand\s{\sigma}

\newcommand\ga{\gamma}

\newcommand{\CC}{\ensuremath{\mathbb{C}}}

\newcommand{\ZZ}{\ensuremath{\mathbb{Z}}}

\newcommand{\NN}{\ensuremath{\mathbb{N}}}
\newcommand{\hol}{\ensuremath{\mathcal{O}}}

\newcommand{\ra}{\ensuremath{\rightarrow}}

\numberwithin{equation}{section}

\title[Fujita decomposition ]{ Fujita decomposition over  higher dimensional base}
\author{Fabrizio Catanese and Yujiro Kawamata}
\date{}                                           % Activate to display a given date or no date
\address {Lehrstuhl Mathematik VIII\\
Mathematisches Institut der Universit\"at Bayreuth\\
NW II,  Universit\"atsstr. 30\\
95447 Bayreuth}
\email{fabrizio.catanese@uni-bayreuth.de}
\address {Graduate School of Mathematical Sciences, University of Tokyo,\\
Komaba, Meguro, Tokyo, 153-8914, Japan}
\email{kawamata@ms.u-tokyo.ac.jp}
\subjclass[2010]{ 14D07, 14C30, 32G20, 33C60}
\begin{document}
\maketitle

\begin{abstract}
We generalize a result of Fujita,  on the decomposition of Hodge bundles 
over curves,  to the case of a higher dimensional base.
\end{abstract}

%\tableofcontents

%%%%%%%%%%%%%%%%%%%%%
%%%%%%%%%%%%%%%%%%%%%
\section{introduction}

The purpose of this paper is to generalize a result of Fujita, on the decomposition of Hodge bundles 
over curves,  to the case of a  higher dimensional base.
We first consider the direct image sheaf of the relative canonical sheaf for a fibration
whose degeneracy locus (set of critical values) is  contained in a normal crossing divisor and with unipotent 
local monodromies.
It is known that such a sheaf is locally free and numerically semipositive (or {\em nef}) 
by Fujita and Kawamata (\cite{F1}, \cite{K1}).
Moreover if the base space of the fibration is a curve, then Fujita proved that the direct image sheaf 
is a direct sum of an ample vector bundle and a unitary flat  bundle  with respect to 
the natural hermitian metric (\cite{F2}). 
We shall generalize the latter result to the case of a higher dimensional base.

The algebraic statement on the nefness of the Hodge bundle was the starting point of many positivity 
results concerning canonical and pluri-canonical sheaves.
These results were used in the minimal model program.
But the original theorem's statement is more analytic saying that the direct image sheaf carries a 
semi-positive hermitian metric with mild but not algebraic singularities.
We believe that this analytic statement should have more consequences 
(see e.g. Theorem~\ref{sum}).

The following is our first main result:

\begin{Thm}\label{main}  [unipotent monodromies case] 

Let $f: X \to Y$ be a proper surjective morphism from a compact connected 
K\"ahler manifold to a smooth projective variety.
Assume that there are respective simple normal crossing divisors $B$  on $X$ and  $D$ on $Y$ such that 
$f$ is smooth over $Y \setminus D$ and that $f^{-1}(D) = B$ set-theoretically.
Set $X^o : = X \setminus B$, $Y^o : = Y \setminus D$, $f^o : = f \vert_{X^o}$ and $n : = \dim X - \dim Y$. 

Assume that the local monodromies of the local system $R^nf^o_*\mathbf{C}_{X^o}$ around the 
branches of $D$ are unipotent.
Let $V : = f_*\mathcal{O}_X(K_X - f^*K_Y)$ be the direct image sheaf of the relative canonical sheaf.
Then 

(1) $V$ is a locally free sheaf which is numerically semi-positive.

Moreover 

(2) there is a decomposition $V = U \oplus W$, 
an orthogonal direct sum with respect to the natural hermitian metric,
where $U$ is a unitary flat bundle and $W$ is generically ample, i.e., 
$W \vert_C$ is an ample vector bundle for any general curve section $C$ of $Y$.   
\end{Thm}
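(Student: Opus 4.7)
The plan is to reduce part (2) to Fujita's theorem on curves and to isolate the flat summand via the variation of Hodge structure over $Y^o$, part (1) being the known result of Fujita \cite{F1} and Kawamata \cite{K1}. I would work first on $Y^o$, where the local system $\mathbb{L} := R^n f^o_* \mathbf{C}_{X^o}$ underlies a polarized variation of Hodge structure of weight $n$, whose top Hodge piece $F^n \mathbb{L}$, after Deligne's canonical extension (available thanks to the unipotent monodromy assumption), is canonically identified with $V$. Define $\mathbb{U} \subset \mathbb{L}$ to be the maximal sub-local-system whose Hodge structure at each point of $Y^o$ is concentrated in pure bidegree $(n,0)$. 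The polarization restricted to such a pure-type sub-VHS is a positive definite hermitian form, so $\mathbb{U}$ is unitary and $U^o := \mathbb{U} \otimes \mathcal{O}_{Y^o}$ is a unitary flat holomorphic sub-bundle of $V|_{Y^o}$. By Deligne's semi-simplicity theorem for polarized VHS, $\mathbb{L}$ splits orthogonally as $\mathbb{U} \oplus \mathbb{U}^\perp$, giving $V|_{Y^o} = U^o \oplus W^o$ with $W^o := F^n \mathbb{U}^\perp$.

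To extend the decomposition across $D$: the monodromy of $\mathbb{U}$ is globally unitary and, around each branch of $D$, unipotent by restriction of the given hypothesis; since a unitary unipotent matrix is the identity, the local monodromies of $\mathbb{U}$ around $D$ are trivial. Hence $\mathbb{U}$ extends to a unitary local system on all of $Y$, $U^o$ extends to a unitary flat bundle $U$ on $Y$, and the canonical extension of $\mathbb{U}^\perp$ provides a complementary sub-bundle $W \subset V$; the splitting $V = U \oplus W$ is orthogonal with respect to the extended Hodge metric.

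For generic ampleness of $W$, I would choose a general smooth complete-intersection curve $C \subset Y$ meeting $D$ transversally at smooth points, so that the base-changed family over $C$ satisfies the hypotheses of Fujita's original theorem \cite{F2}. Fujita produces a decomposition $V|_C = U^{\mathrm{Fuj}}_C \oplus W^{\mathrm{Fuj}}_C$ with $U^{\mathrm{Fuj}}_C$ unitary flat and $W^{\mathrm{Fuj}}_C$ ample. A Lefschetz-type surjection $\pi_1(C \cap Y^o) \twoheadrightarrow \pi_1(Y^o)$, valid when $C$ is the general complete intersection of sufficiently ample divisors, gives a bijection between sub-local-systems on $C \cap Y^o$ and on $Y^o$; under this bijection, Fujita's unitary flat summand corresponds to the maximal $(n,0)$-type sub, namely $\mathbb{U}|_{C \cap Y^o}$. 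Hence $U|_C = U^{\mathrm{Fuj}}_C$ and consequently $W|_C = W^{\mathrm{Fuj}}_C$ is ample.

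The principal obstacle lies in this last identification — matching the globally constructed $U$ with Fujita's unitary flat summand on $C$. This requires (i) extracting from Fujita's curve proof that the unitary flat part of $V|_C$ really coincides with $F^n$ of the largest $(n,0)$-concentrated sub-VHS, and (ii) arranging the $\pi_1$-surjection simultaneously with transversality of $C$ to the possibly singular branches of $D$. Extending across $D$ without losing orthogonality is handled cleanly by Deligne's canonical extension under the unipotent hypothesis.
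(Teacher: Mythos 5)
Your construction of the flat summand is genuinely different from the paper's: you define $U$ Hodge-theoretically as $F^n$ of the maximal sub-local-system $\mathbb{U}\subset\mathbb{L}$ of pure type $(n,0)$, and then try to import the ampleness of the complement from Fujita's curve theorem \cite{F2}. The paper goes the other way around: it first produces $W$ globally as the next-to-last term $N_{m-1}$ of a Harder--Narasimhan filtration of $V$, so that by Mehta--Ramanathan \cite{MR} the restriction $W|_C$ to a general curve section has all Harder--Narasimhan slopes strictly positive and is therefore ample by Hartshorne's criterion \cite{Hartshorne} essentially for free; the real work is then to show that the degree-zero quotient $U'=V/N_{m-1}$ splits off as a unitary flat orthogonal summand, which is done via the curvature computation (vanishing of curvature and of boundary contributions when $\deg U'_C=0$), the splitting lemma Theorem~\ref{sum}, triviality of the local monodromies (unipotent and unitary), and a gluing argument over moving curves using $\pi_1(Y^{oo})\cong\pi_1(Y)$. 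In your version the difficulty is relocated entirely into the ampleness of $W|_C$, i.e.\ into the identification $U|_C=U^{\mathrm{Fuj}}_C$ --- and that is exactly where the argument is incomplete.

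The gap is the inclusion $U^{\mathrm{Fuj}}_C\subseteq U|_C$. The Lefschetz surjection $\pi_1(C\cap Y^o)\twoheadrightarrow\pi_1(Y^o)$ does give a bijection between sub-local-systems of $\mathbb{L}|_{C\cap Y^o}$ and of $\mathbb{L}$, but it does \emph{not} give that the maximal type-$(n,0)$ sub-local-system over $C\cap Y^o$ is the restriction of the maximal one over $Y^o$: a sub-local-system $\mathbb{M}\supsetneq\mathbb{U}$ could satisfy $\mathbb{M}\otimes\mathcal{O}\subset F^n$ along $C\cap Y^o$ while failing this closed analytic condition generically on $Y^o$, and you cannot simply invoke generality of $C$ because the offending $\mathbb{M}$ depends on $C$ (one needs an incidence-variety argument over the parameter space of $\pi_1(Y^o)$-invariant subspaces, or an appeal to the semisimplicity/fixed-part theorem to promote $\mathbb{M}$ to a sub-VHS whose Hodge numbers are then constant). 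This is not a technicality: if $U^{\mathrm{Fuj}}_C\supsetneq U|_C$, then $W|_C$ contains the degree-zero unitary flat bundle $U^{\mathrm{Fuj}}_C\cap (U|_C)^{\perp}$ as an orthogonal direct summand and is \emph{not} ample, so the whole of statement (2) hinges on precisely this equality. You also rely on the unproved (though true, cf.\ \cite{cat-dett}) characterization of Fujita's flat summand as $F^n$ of the maximal $(n,0)$-sub-VHS. Both points are flagged in your last paragraph as "the principal obstacle," but flagging them is not closing them; the paper's Mehta--Ramanathan route together with Theorem~\ref{sum} is designed exactly to avoid having to prove them.
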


The first assertion is proved in \cite{F1}  for the case $\dim Y = 1$ and in \cite{K1} generally.
See also \cite{K2}.
The second assertion when $\dim Y = 1$ is the essential part of Fujita's theorem \cite{F2}.
See \cite{cat-dett1}, \cite{cat-dett} for details of proof of Fujita's more general statement.
 We recall that one or both of the two summands $U,W$ can be $=0$; 
in \cite{cat-dett} and \cite{cat-dett2} 
examples are given where $U$ corresponds to a representation 
of $\pi_1(Y)$ of infinite order, and $W \neq 0$, in particular  $V$  is not semi-ample.  

Next we prove the following full fledged analogue of Fujita's  decomposition theorem over a higher dimensional base:

\begin{Thm}\label{full}  [general case]

Let $f: X \to Y$ be a proper surjective morphism from a compact connected 
K\"ahler manifold to a smooth projective variety.
Let $V : = f_*\mathcal{O}_X(K_X - f^*K_Y)$ be the direct image sheaf of the relative canonical sheaf.
Then 

(1) $V$ is a torsion free sheaf which is weakly-positive.

(1') $V$ is a locally free sheaf if there is a normal crossing divisor $D$ such that 
$f$ is smooth over $Y \setminus D$.

Moreover 

(2) there is a decomposition $V = U \oplus W$, an orthogonal direct sum with respect to the natural hermitian metric,
where $U$ is a locally free sheaf 
which is unitary flat with respect to the natural hermitian connection 
and $W$ is generically ample, i.e., 
$W \vert_C$ is an ample vector bundle for any general curve section $C$ of $Y$.   
\end{Thm}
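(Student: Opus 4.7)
The plan is to reduce Theorem~\ref{full} to Theorem~\ref{main} via a finite Galois base change that achieves unipotent monodromies, and then to descend the resulting decomposition by Galois invariants. Parts (1) and (1') are essentially in the literature: torsion-freeness and weak positivity of $V$ are due to Viehweg, and the local freeness in (1') under the SNC hypothesis is part of the Hodge-theoretic direct image theory (Kollár, Kawamata). So the new content is assertion (2), on which I focus.

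First, by Kawamata's covering lemma (unipotent reduction), there exists a finite surjective Galois morphism $\pi : Y' \to Y$ with group $G$, branched along an SNC divisor, such that after a log-resolution $X' \to X \times_Y Y'$ one obtains a morphism $f' : X' \to Y'$ fitting the hypotheses of Theorem~\ref{main}; in particular the monodromies of $R^n f'^{o}_* \mathbf{C}$ around the resulting SNC divisor $D' \subset Y'$ are unipotent. A standard base change/resolution comparison for the relative dualizing sheaf identifies $V' := f'_* \mathcal{O}_{X'}(K_{X'} - (f')^* K_{Y'})$ with the reflexive hull of $\pi^* V$, so that $V \cong (\pi_* V')^G$. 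By Theorem~\ref{main}, there is an orthogonal decomposition $V' = U' \oplus W'$ with $U'$ unitary flat and $W'$ generically ample.

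Next, this decomposition is $G$-equivariant: the natural hermitian metric on $V'$ is canonical and hence $G$-invariant, and $U'$ admits an intrinsic characterization as the maximal unitary flat subbundle of $V'$, which is therefore preserved by $G$; $W'$ is its orthogonal complement. Setting $U := (\pi_* U')^G$ and $W := (\pi_* W')^G$ yields the desired orthogonal decomposition $V = U \oplus W$. That $U$ is unitary flat follows because the unitary representation of $\pi_1(Y'^{o})$ defining $U'$, together with its $G$-equivariant structure, assembles into a unitary representation of $\pi_1(Y^{o})$ whose flat bundle extends to $U$ on $Y$. That $W$ is generically ample follows because, for a general curve section $C \subset Y$ with preimage $C' \subset Y'$, the bundle $W'|_{C'}$ is ample, hence $\pi_*(W'|_{C'})$ is ample on $C$ (direct image of an ample bundle under a finite morphism), and $W|_C$ is a direct summand of this pushforward.

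The main obstacle is establishing the identification $V' \cong (\pi^* V)^{\vee\vee}$ through the log-resolution step, since the formation of the relative canonical sheaf is not automatically compatible with arbitrary resolutions of the fiber product; this requires the standard fact (due to Viehweg) that $f_* \omega_{X/Y}$ is insensitive to log-resolutions, combined with base change for the pushed-forward Hodge bundle. A subsidiary but essential point is the intrinsic characterization of $U'$ within $V'$, which is needed both for the $G$-equivariance of the splitting and to ensure that the descended bundle $U$ is genuinely unitary flat on $Y$ rather than merely flat after pullback to $Y'$.
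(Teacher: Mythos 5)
There is a genuine gap. Your strategy --- pass to a Kawamata cover $\pi: Y' \to Y$ with Galois group $G$ to force unipotent monodromies, apply Theorem~\ref{main} upstairs, and descend by taking $G$-invariants --- does not prove assertion (2) as stated; it proves the weaker orbifold statement (Proposition~\ref{orb} in the paper). The problem is the descent of the flat summand. The unitary flat bundle $U'$ on $Y'$ corresponds to a representation of $\pi_1(Y')$; descending its $G$-equivariant structure gives a unitary representation $\rho$ of $\pi_1(Y \setminus D)$ whose local monodromies $\rho(\gamma_j)$ around the components of the branch divisor satisfy $\rho(\gamma_j)^{m_j} = 1$ but need not be trivial. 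Such a representation does not extend to a unitary flat bundle on all of $Y$, and $(\pi_* U')^G$ is in general only an orbifold flat sheaf. Relatedly, $(\pi_* V')^G$ is only \emph{generically} isomorphic to $V$: the two extensions across $D$ differ by the fractional boundary exponents $p_j$, so the identification $V \cong (\pi_* V')^G$ on which your descent rests also fails along $D$. In Theorem~\ref{full}(2) the piece of the orbifold flat sheaf with nontrivial finite-order local monodromy must end up inside the generically ample summand $W$, and your construction does not detect this.

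The missing idea, which is how the paper actually argues, is to work directly on $Y$ (after only a \emph{birational} log resolution making $D$ simple normal crossing, with no finite cover), take the Harder--Narasimhan filtration of $V$, restrict the degree-zero last quotient $U'$ to a general curve $C$, and use that $\deg(U'_C)$ is the integral of the (semipositive) curvature over $C^o$ plus the sum of nonnegative boundary contributions $\alpha_P$; the vanishing of $\deg(U'_C)$ forces every $\alpha_P = 0$, which forces the local monodromies to be unipotent, hence --- being also unitary --- trivial. Only then does the flat bundle on $C^o$ (and subsequently on a big open subset, and on $Y$) extend as a genuine unitary flat bundle. This step has no substitute in your argument. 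A secondary error: the direct image of an ample vector bundle under a finite morphism of curves is not ample in general (already $\deg \pi_* L = \deg L + \deg \det \pi_* \mathcal{O}_{C'}$ can be negative), so your justification of the generic ampleness of $W$ also fails; in the paper this instead follows from Hartshorne's criterion applied to the positive-slope part $N_{m-1}$ of the Harder--Narasimhan filtration, combined with the Mehta--Ramanathan restriction theorem.
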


(1) and (1') are proven in \cite{viehweg}  using \cite{K1}. 
There are two ways to prove the local freeness of $V$:\\
(i) reducing to the unipotent monodromy case by using the flatness of the covering of the base as in \cite{viehweg};\\
(ii) as a consequence of the extension of the Hodge filtration to the canonical extension, in the same 
way as in the unipotent monodromy case (cf. \cite{K2}).

\vskip 1pc

Recall that (\cite{F1}, \cite{K1}, resp. \cite{viehweg})
a locally free sheaf $V$  on a projective variety $Y$ is said to be {\em numerically semi-positive}, or {\em nef}, 
if given any curve $C$ and any quotient bundle $Q$ of $V | C$, 
$\text{deg}(Q) \geq 0$; whereas a torsion free sheaf  $V$
on a projective variety $Y$  is  said to be {\em weakly positive} if there  are 
a Zariski open  subset $U$  and an ample divisor $H$ on $Y$ 
such that, for each positive integer $n$, the natural homomorphism
\[
H^0(Y, S^{mn}(V)^{\vee \vee} \otimes \mathcal{O}_Y(mH)) \otimes \mathcal{O}_Y \to 
S^{mn}(V)^{\vee \vee}  \otimes \mathcal{O}_Y(mH)
\]
is surjective on $U$ for some positive integer $m$ (here $V^{\vee \vee} $ is the double dual of $V$).

\vskip 1pc

It makes sense to separate  Theorems \ref{main} and  \ref{full}, 
since from the former follows an orbifold  version
of Fujita's decomposition  theorem in the general case.

This orbifold version  is proved in the final section, 
where we shall compare, through an appropriate diagram, the direct image
sheaves  in the different situations which occur in practice, when one takes first a normal crossing resolution of
the degeneracy locus, and then, as in  \cite{K1}, a finite  cover of the base in order to reduce to the case
of unipotent local monodromies.

We define a flat unitary orbifold sheaf on a smooth variety $Y$ in the 
following way.
Let $D$ be a divisor on $Y$, with irreducible components $D_1, \dots D_k$.
An orbifold unitary representation  $\rho : \pi_1 (Y \setminus D) \ra U(r)$, 
with  orders $m_1, \dots, m_k \in \NN_{\geq 1}$ is defined to be  one such  
that, for a small loop $\ga_j $ around $D_j$, 
the image $\rho (\ga_j) $ has order exactly $m_j$.  
Let $Y^{**}$ be the complement of the locus where $D$ is not normal crossing. 
We define a locally free sheaf $U^{**}$ on 
$Y^{**}$ by taking the upper canonical extension of the flat unitary bundle $\sU$ 
on $Y^0 = Y \setminus D$ associated to the representation $\rho$.
 Then we define a {\em flat unitary orbifold sheaf} $U^{\text{orb}}(\rho)$ 
associated to the orbifold unitary representation  $\rho$ 
as the direct image sheaf $i_* (U^{**})$, where $ i : Y^{**} \ra Y$ is the inclusion. 

\begin{Prop}\label{orb}
In the general case we have an orthogonal splitting  $V = U^{\text{orb}} \oplus W$,
where $W$ is generically ample but where this time  $U^{\text{orb}}$ is an orbifold flat unitary sheaf, 
having vanishing curvature  outside the locus $D$ over which $f$ is not smooth.  

$U^{\text{orb}}$ is a  locally free sheaf  on a big open subset $Y^{oo}$ of $Y$ on which 
$D$ is a normal crossing divisor. 
 On $Y^{oo}$,  $U^{\text{orb}}$  coincides with the upper canonical extension 
(see the next section) corresponding 
 to a unitary representation  $\rho : \pi_1 (Y \setminus D) \ra U(r)$, 
 with $r = \text{rank} \ (U^{\text{orb}})$,
such that, for small loops $\ga_j $ around the divisorial components $D_j$ of $D$, 
the $\rho (\ga_j) $'s have  finite order.  

$U$ is  a locally free summand  of $U^{\text{orb}}$,
corresponding to the irreducible summands $\rho_h$ of $\rho $ such that $\rho_h (\ga_j) =1 $ 
for all  $\ga_j $.
\end{Prop}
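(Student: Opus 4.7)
The plan is to reduce Proposition \ref{orb} to the unipotent monodromies case already handled by Theorem \ref{main}, via Kawamata's construction of a finite Galois cover $\pi : Y' \to Y$ making the local monodromies unipotent, followed by Galois descent of the resulting Fujita decomposition.

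After possibly further blow-ups I may assume that $f^{-1}(D) = B$ is SNC over the big open subset $Y^{oo} \subset Y$ where $D$ is SNC. Following \cite{K1}, I pick a finite Galois cover $\pi : Y' \to Y$ with group $G$, ramified along $D$ with ramification indices $m_j$ chosen large enough that the pulled-back local monodromies of $R^n f^o_* \mathbf{C}_{X^o}$ become unipotent around each branch of $\pi^{-1}(D)$. Taking an equivariant desingularization $f' : X' \to Y'$ of the fiber product, Theorem \ref{main} applied to $f'$ yields an orthogonal decomposition $V' = U' \oplus W'$ with $U'$ unitary flat and $W'$ generically ample. Since the Fujita decomposition is canonical---characterized intrinsically by the curvature of the natural hermitian metric---this splitting is automatically $G$-equivariant. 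On the open part of $Y$ over which $\pi$ is \'etale, the identification $V = (\pi_* V')^G$ then produces an orthogonal splitting $V = U^{\text{orb}} \oplus W$ with $U^{\text{orb}} := (\pi_* U')^G$ and $W := (\pi_* W')^G$.

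Because $U'|_{Y' \setminus \pi^{-1}(D)}$ is simultaneously unitary and unipotent around each branch, its local monodromies there are trivial, so the representation defining $U'$ extends across $\pi^{-1}(D)$. Pushing down to $Y \setminus D$ yields a unitary representation $\rho : \pi_1(Y \setminus D) \to U(r)$ whose monodromy $\rho(\gamma_j)$ around each divisorial branch $D_j$ lies in the image of the local inertia group and hence has order dividing $m_j$. On $Y^{oo}$, the sheaf $(\pi_* U')^G$ coincides with the upper canonical extension of the flat unitary bundle $\sU$ attached to $\rho$, by the compatibility of upper canonical extensions with finite ramified covers; extending globally via $i_* U^{**}$ recovers the orbifold flat unitary sheaf $U^{\text{orb}}$ as defined just before the proposition. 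Finally, decomposing $\rho$ into irreducibles $\rho = \bigoplus_h \rho_h$ and selecting those summands with $\rho_h(\gamma_j) = 1$ for every $j$ produces the honest flat unitary summand $U \subset U^{\text{orb}}$ claimed at the end of the statement.

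The principal obstacle is the Galois descent itself, where one must simultaneously descend the orthogonal splitting, the hermitian metric and the flat connection, and verify that the resulting representation of $\pi_1(Y \setminus D)$ has the stated finite-order monodromies around each divisorial component of $D$. A further subtle point is that $\pi$ is typically not \'etale along the codimension $\geq 2$ strata of $D$, so $U^{\text{orb}}$ will in general fail to be locally free on all of $Y$; this is precisely what forces the restriction to $Y^{oo}$ and the passage to the pushforward $i_* U^{**}$, and it requires showing that the analytic upper canonical extension on $Y^{oo}$ patches consistently with the descended sheaf near the complement of $Y^{oo}$.
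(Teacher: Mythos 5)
Your proposal is correct and follows essentially the same route as the paper: Kawamata's covering trick to reach the unipotent case, Theorem~\ref{main} on the cover, descent by taking $G$-invariants, and identification with the (upper) canonical extension to read off the finite-order monodromies $\rho(\ga_j)$. The only notable difference is bookkeeping: the paper first interposes a log resolution $u: Y' \to Y$ so that Kawamata's cover is taken over an honest SNC situation, and then defines $U^{\text{orb}}$ as the hull (saturation) of $u_*(u'_*U'')^G$ inside $V$, which handles the non-SNC locus of $D$ that you instead deal with by restricting to $Y^{oo}$ and pushing forward.
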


Acknowledgement:  This paper grew out of a discussion when the second author visited the first author
at the University of Bayreuth.
The present work took place in the framework  of the 
ERC Advanced grant n. 340258, `TADMICAMT',  and of the JSPS Grant-in-Aid (A) 16H02141.
The second author would also like to thank the National Center for Theoretical Sciences at Taiwan University and
Professor Jungkai Chen for the excellent working conditions. 
   
%%%%%%%%%%%%%%%%%%%%
%%%%%%%%%%%%%%%%%%%%
%%%%%%%%%%%%%%%%%%%%
\section{Proof of Theorem~\ref{main}}

We use the notation of Theorem~\ref{main}.
The first assertion (1), the nefness of $V$, is proved in the following way.
This is not new 
(see the  papers cited in the bibliography, also for more references regarding several assertions made here), 
but we briefly recall the proof because we need the notation used in the argument.

We assume that the degeneracy locus $D$ of $f$ (the set of critical values of $f$) is  contained in 
a normal crossing divisor, but do not assume that 
the local monodromies around $D$ are unipotent, so that the following construction can be used 
also for the proof of Theorem~\ref{full}.

The primitive part $H^o_{\ZZ}$ of the local system $R^nf^o_*\CC_{X^o}$ 
underlies a polarized variation of Hodge structures.
Let $H^o : = H^o_{\ZZ} \otimes_{\CC} \hol_{Y^o}$.
It is a locally free sheaf on $Y^o$ with a flat connection, called the {\em Gauss-Manin connection}, 
whose sheaf of flat sections is $H^o_{\CC}$.
The direct image sheaf $V^o = f^o_*\mathcal{O}_X(K_X - f^*K_Y)$ is identified as the subsheaf $F^n(H^o)$, 
where $F^{\cdot}$ denotes the Hodge filtration.
The cup product along the fibres of $f^o$ determines a polarization of the Hodge structures, 
a non-degenerate but not definite hermitian metric on $H^o$ which is flat with respect 
to the Gauss-Manin connection, but with the property that  the restriction of this metric to $V^o$ is positive definite.

$H^o$ has an extension $H$ as a locally free sheaf on $Y$ called the {\em canonical extension} 
constructed in the following way (\cite{deligne}).
Let $x_j$ be local coordinates on $Y$ such that $D$ is expressed as $\prod_{j=1}^r x_j = 0$, and 
let $T_j$ be the corresponding local monodromy transformations for $1 \le j \le r$.
Then $H$ is locally generated by the following holomorphic sections
\[
\exp (- \frac 1{2\pi i} \sum_{j=1}^r \log T_j \log x_j)v
\]
where the $v$ are multi-valued flat sections of $H^o$, and the matrices $\log T_j$ are defined to be
$\log U_j + \log S_j$ for the decomposition $T_j = U_jS_j$ into unipotent and semisimple part.
We take the branches of the logarithm function such that the eigenvalues of the matrices $\log S_j$ 
belong to the interval $[0,2\pi i)$.
We note that $\log U_j$ is well defined because $U_j - I$ is nilpotent, 
and that the expression is single vlaued even if $v$ is multi-valued.

If the local monodromies are unipotent, 
then the local holomorphic sections of $H$ are characterized, among local holomorphic sections of $H^o$, 
by the condition that the norm of their restrictions to $H^o$ 
grow at most logarithmically along the boundary $D$.
In general, they grow asymptotically as  $\prod_{j=1}^r \vert x_j \vert^{-p_j} \vert \log x_j \vert^{q_j}$ 
for some $p_j,q_j$ such that $0 \le p_j < 1$.
We note that the failure of the unipotency of the $T_j$ 
is reflected in the eigenvalues of the $S_j$, whence the exponents $p_j$ above.
We call these exponents  $p_j$ {\em boundary contributions}.

The Hodge filtration on $H^o$ extends to $H$ in such a way that the $F^p(H)$'s are locally free subbundles.
Moreover we have $V = f_*\mathcal{O}_X(K_X - f^*K_Y) = F^n(H)$. 
This fact holds even if the local monodromies are not unipotent as long as 
$f$ is smooth outside a normal crossing divisor.
In particular $V$ is locally free (see also \cite{K2}).

The Hodge-Riemann bilinear relations and  Griffiths'  transversality (\cite{Griffiths}) 
imply that the curvature of the connection of $V^o$ corresponding to the restricted metric on $V^o$ is semi-positive.
The nefness of $V$ follows from this semi-positivity together with the observation 
that the boundary contributions along $D$ vanish because the metric grows at most logarithmically.

\vskip 1pc

Now we prove the latter assertion (2).
We fix an ample line bundle $L$ on $Y$.
Let $0 \subset N_1 \subset \dots \subset N_m = V$
be a Harder-Narasimhan filtration with respect to $L$;
the $N_i$ are reflexive subsheaves of $V$ such that the subquotients $N_i/N_{i-1}$ are semistable
torsion free sheaves with respect to $L$, 
and the inequalities $\mu(N_i/N_{i-1})> \mu(N_{i+1}/N_i)$ hold for all $i$, where 
the slope $\mu$ of a torsion free sheaf is defined by $\mu(\bullet) = \text{deg}(\bullet)/\text{rank}(\bullet)$
with $\text{deg}(\bullet) = c_1(\bullet)L^{\dim Y-1}$.

By a theorem of Mehta-Ramanathan \cite{MR}, if we take a general curve section $C$ 
(a complete intersection of general hypersurfaces of sufficiently high degrees),
then the restriction $0 \subset N_{1,C} \subset \dots \subset N_{m,C} = V_C$ for 
$N_{i,C} = N_i \otimes \mathcal{O}_C$ is a Harder-Narasimhan filtration of $V_C = V \otimes \mathcal{O}_C$.
 
Let $U' = V/N_{m-1}$ be the last quotient.
Since $V$ is nef, we have $\text{deg}(U') \ge 0$.
If $\text{deg}(U') > 0$, then $V_C$ is an ample vector bundle on $C$ by Hartshorne \cite{Hartshorne}.
In this case, we have $V = W$.

Hence we may assume in  the following that $\text{deg}(U') = 0$.
In this case, we set $W : = N_{m-1}$.
Then $W_C = N_{m-1,C}$ is an ample vector bundle on $C$. 
We shall prove that  the quotient bundle $U'$ splits and there is a subbundle $U \cong U'$ 
which is a unitary flat bundle using the following general result
(a similar  argument is given on page 86 of \cite{cat-dett}).
Shigeharu Takayama communicated to the  authors that the following result was independently proved by 
Genki Hosono (cf. \cite{Hosono}):
 
\begin{Thm}\label{sum}
Let $X$ be a complex manifold which is not necessarily compact, and let 
\[
0 \to E \to F' \to G \to 0
\]
be an exact sequence of holomorphic vector bundles on $X$.
Assume that $F'$ has a smooth positive definite hermitian metric whose curvature form
$\Theta_{F'}$ is semi-positive, 
and that the curvature form $\Theta_G$ of the induced metric on $G$ vanishes.
Then the orthogonal complement $E^{\perp}$ of $E$ with respect to the metric on $F'$ 
is a holomorphic subbundle of $F'$ which is isomorphic to $G$.
In particular we have $F' \cong E \oplus^{\perp} G$ as holomorphic bundles with metric.
\end{Thm}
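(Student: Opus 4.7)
My plan is to reduce the claim to the vanishing of a second fundamental form and then extract that vanishing from the curvature hypotheses via Griffiths' classical formula for a holomorphic exact sequence of Hermitian vector bundles. The smooth Hermitian metric on $F'$ gives an orthogonal decomposition $F' = E \oplus E^\perp$ as $C^\infty$ vector bundles, and the composition $E^\perp \hookrightarrow F' \twoheadrightarrow G$ is a smooth bundle isomorphism which identifies the induced metric on $E^\perp$ with the quotient metric on $G$. To prove the theorem it suffices to promote this to a holomorphic isomorphism, equivalently to show that $E^\perp$ is a holomorphic sub-bundle of $F'$, which in turn is equivalent to the vanishing of the second fundamental form $\beta \in A^{1,0}(\Hom(E, E^\perp))$ of $E$ inside $F'$ with respect to the Chern connection of $F'$.

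For the vanishing, I would invoke Griffiths' identity, which under the smooth identification $G \cong E^\perp$ reads
\[
\Theta_G \;=\; \Theta_{F'}|_{E^\perp} \;+\; \beta \wedge \beta^{*}.
\]
Both summands on the right are Griffiths semi-positive $\Hom(E^\perp, E^\perp)$-valued $(1,1)$-forms: the first by the hypothesis $\Theta_{F'} \ge 0$, and the second because pointwise $\bigl\langle \sqrt{-1}(\beta \wedge \beta^{*})(v, \bar v)\, q, q\bigr\rangle = |\beta(v)^{*} q|^2 \ge 0$. Since $\Theta_G = 0$ by hypothesis, a sum of two nonnegative Hermitian forms vanishes, so both vanish separately. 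Pointwise positivity then gives $\beta(v)^{*} q = 0$ for every tangent vector $v$ and every $q \in E^\perp$, hence $\beta = 0$.

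Once $\beta = 0$, the Chern connection of $F'$ preserves the smooth decomposition $E \oplus E^\perp$. Taking the $(0,1)$-part, $\bar\partial^{F'}$ preserves sections of $E^\perp$, so $E^\perp$ is a holomorphic sub-bundle; the smooth isomorphism $E^\perp \to G$ is then a composition of holomorphic bundle maps and therefore itself holomorphic, yielding the desired orthogonal holomorphic decomposition $F' \cong E \oplus E^\perp$. The one step where care is needed --- and what I would expect to be the main technical obstacle --- is the sign bookkeeping in Griffiths' second fundamental form formula, namely verifying that on the quotient side $\beta \wedge \beta^{*}$ enters with the nonnegative sign (reflecting the standard principle that quotient bundles acquire more curvature, and sub-bundles less, than the ambient bundle). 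Once that formula is in place, the positivity argument and the extraction of the holomorphic structure on $E^\perp$ from the vanishing of $\beta$ are essentially immediate, and no compactness of $X$ is needed since the argument is purely local.
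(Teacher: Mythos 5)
Your proof is correct and follows essentially the same route as the paper: both reduce the statement to the vanishing of the second fundamental form via Griffiths' curvature formula for a sub/quotient bundle, deduce that vanishing from the fact that a sum (or difference) of semi-definite Hermitian forms equal to zero forces each term to vanish, and then conclude that the Chern connection preserves the orthogonal complement, making it a holomorphic subbundle isomorphic to $G$. The only cosmetic difference is that the paper first dualizes the sequence and uses the subbundle curvature formula (so the flat piece sits inside a semi-negatively curved bundle and the identity reads $(\Theta_S u,v) = (\Theta_F u,v) - (\sigma(u),\sigma(v))$), whereas you apply the quotient formula directly; the sign you assign to $\beta \wedge \beta^{*}$ on the quotient side is indeed the correct, semi-positive one.
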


\begin{proof}
We shall prove the dual statement; namely let now
\[
\begin{CD}
0 @>>> S @>i>> F @>p>> Q @>>> 0
\end{CD}
\]
be an exact sequence of holomorphic vector bundles, where 
$F$ has a smooth positive definite hermitian metric whose curvature form
$\Theta_F$ is semi-negative, 
and that the curvature form $\Theta_S$ of the induced metric on the subbundle $S$ vanishes.
Then we shall prove that the orthogonal complement $S^{\perp}$ of $S$ in $F$ is a holomorphic subbundle 
which is isomorphic to the quotient bundle $Q$, hence $F \cong S \oplus^{\perp} Q$.

Let $D_F: A^0(F) \to A^1(F)$ be the connection of $F$ which is compatible with the complex structure 
and the metric, and
let $\s = pD_Fi: A^0(S) \to A^1(Q)$ be the second fundamental form.
Let $p_0: S^{\perp} \to Q$ be the restriction of $p$.
$p_0$ is an isomorphism of $C^{\infty}$ vector bundles with hermitian metrics.
Then $D_S = D_F - p_0^{-1}\s: A^0(S) \to A^1(S)$ and $D_Q = pD_Fp_0^{-1}: A^0(Q) \to A^1(Q)$ 
are the connections which are compatible with the complex structures and the induced metrics.  
We have the  formula 
\[
(\Theta_S u,v) = (\Theta_Fu,v) - (\s(u),\s(v))
\]
for $u,v \in A^0(S)$.
Since $\Theta_S = 0$ and $\Theta_F$ is semi-negative while the metric is positive definite, 
we deduce that $\s = 0$.
Thus $D_F(u) \in A^1(S)$ for $u \in A^0(S)$.
It follows that $D_F(u) \in A^1(S^{\perp})$ for $u \in A^0(S^{\perp})$.
If $u$ is a holomorphic section of $Q$, then $D_Q(u) \in A^{1,0}(Q)$.
It follows that $D_F(p_0^{-1}(u)) = p_0^{-1}D_Q(u) \in A^{1,0}(F)$.
Hence $p_0^{-1}(u)$ is a holomorphic section of $F$.
Therefore $S^{\perp}$ is a holomorphic subbundle.
\end{proof}

We can deduce from Theorem \ref{sum} the following well-known result:

\begin{Expl}
Let $F$ be a locally free sheaf of rank $2$ on an algebraic curve $C$ which is a non-trivial extension of
$\mathcal{O}_C$ by a nef invertible sheaf.
Then $F$ is nef but $F$ does not admit a smooth hermitian metric with semi-positive curvature.
\end{Expl}
 
We apply the above theorem to the restriction $V^o_C$, $(U')^o_C$ and $W^o_C$ 
of $V$, $U'$ and $W$, respectively, to the open part $C^o = C \cap Y^o$.
Since the numerical semi-positivity of $U'$ is a consequence of the semi-positivity of the curvature
and the null boundary contributions, the condition $\text{deg}(U')=0$ implies that 
the curvature of $(U')^o_C$ vanishes.
By Theorem~\ref{sum}, 
there is a holomorphic subbundle $U^o_C$ of $V^o_C$ such that 
$V^o_C = W^o_C \oplus^{\perp} U^o_C$, and such that
$U^o_C$ is flat with respect to the connection associated to the 
positive definite hermitian metric of $V^o_C$.
Thus $U^o_C$ is unitary flat with respect to the Gauss-Manin connection.

Since the local monodromies of the Gauss-Manin connection on $H^o_{\ZZ}$ are unipotent, 
the local monodromies of $U^o_C$ are trivial.
It follows that $U^o_C$ extends to a unitary flat vector bundle $U_C$ on $C$.
Since the curve sections $C$ are general, we can move $C$ around so that the unitary flat bundles 
$U_C$ can be extended to a unitary flat bundle $U^{oo}$ on a big open subset $Y^{oo}$ of $Y$, 
an open subset whose complement has codimension at least $2$ 
by virtue of the following argument.

The union of general curves which are complete intersections of hypersurface sections 
of high degree is a constructible set $Y^{*}$, 
which contains an open  subset of $Y$, else the curves are not general.
Moreover  the complement of $Y^{*}$  does not
contain an open  subset in any irreducible divisor $\Delta$ since  $C$ is general.
Thus  the complement of   $Y^*$ has codimension at least $2$. 
By the same token, at the general points of  $Y^*$ 
the tangents of the curves $C$ fill out  a big open subset.
Hence we get a unitary flat bundle $U^{oo}$ on a big open set $Y^{oo}$. 
Since $\pi_1(Y^{oo}) \cong \pi_1(Y)$, $U^{oo}$ extends to a unitary flat bundle on the whole $Y$.

On the other hand, the canonical extension $H$ of $H^o$ is characterized as a holomorphic
vector bundle by the condition that the norms of its holomorphic sections have at most logarithmic growth.
Therefore $U$ is still a holomorphic subbundle of $V$, and we have a direct sum 
decomposition $V = W \oplus U$.
\qed

\begin{Rem}\label{reflexive}
Keeping the same assumptions of  Theorem \ref{main}, 
further use of Theorem \ref{sum} shows:
if there is a generically surjective homomorphism $V \ra Q$  to a reflexive sheaf
with  $\text{deg}(Q)=0$, then  there is a direct sum decomposition 
$V = W' \oplus Q'$ with $Q' \cong Q$ being a unitary flat bundle with respect to the natural hermitian metric
(see also the argument in the next section).
\end{Rem}

%%%%%%%%%%%%%%%%%%%%%%%
\section{ Proof of Theorem~\ref{full}} 

In this section we fully extend Fujita's decomposition theorem to the case 
of a higher dimensional base variety $Y$, proving Theorem~\ref{full}.

In the proof we shall use a similar notation to the one of Theorem~\ref{main}.
We denote by $D$ the degeneracy  locus of $f$, 
the locus over which $f$ is not smooth (also called  the set of critical values of $f$), 
so that $f$ is smooth over $Y^o = Y \setminus D$.

We take a log resolution $u: Y' \to Y$ for $D \subset Y$; $Y'$ is smooth and projective,
$u$ is birational, and $D' = u^{-1}(D)$ is a simple normal crossing divisor.
Since the direct image sheaf $V$ does not depend on the choice of a birational model of $X$,
we may assume that there is a morphism $f: X \to Y'$ such that $f = u \circ f'$.

Let $V' = f'_*\mathcal{O}_X(K_X - (f')^*K_{Y'})$.
It is a locally free sheaf by theorem 4.1 of \cite{viehweg}.
Moreover, the weak-positivity of $V'$ and $V$ was proven by Viehweg (theorem III of \cite{viehweg}).

Since $K_{Y'} \ge u^*K_Y$, we have $u_*V' \subset V$.
More precisely,  if $R$ is the ramification divisor of $u$, $K_{Y'} = u^* (K_Y) + R$, we have: 
\[
V =  f_*(\mathcal{O}(K_X))(-K_Y) =u_*(f'_*(\mathcal{O}(K_X))(-K_{Y'})+R) \supset u_*(V').
\]

On the other hand, since these sheaves are equal  outside of codimension  $2$
and $V$  is torsion free, $V$ embeds into 
the reflexive hull of $u_*V'$: 
$V \subset (u_*V')^{\vee \vee}$.

Our proof of (2) of Theorem~\ref{full} is similar to that of Theorem~\ref{main} in the following way.
We take a Harder Narasimhan filtration of $V$, let $U'$ be the last quotient, 
and assume that $\text{deg}(U') = 0$.
By Mehta-Ramanathan, we take a generic curve $C$ and consider the restriction $U'_C$.
We recall that $\text{deg}(U'_C)$ is the sum of the integration of the curvature on the open part 
$C^o = C \cap Y^o$ and the sum of the boundary contributions, 
as shown in lemma 21, page 268  of \cite{K1}. 
Indeed the line bundle $L : = \Lambda^{top} ( U'|_C)$ on $C$ has a smooth metric outside a finite set of points $P$,  
where the growth of  the norm of a local generating section $s_P$
is asymptotic to $|s_P| \sim |t_P|^{-\al_P} (\log |t_p|)^{\beta_P}$, where the $\al_P$ are 
non-negative rational numbers,
in view of the choice of the branch for the canonical extension. 
We observe that each term is non-negative, 
due to the semipositivity of the curvature.
Since $\text{deg}(U'_C) = 0$, both contributions vanish, i.e., 
all the numbers $\al_P =0 $, and the curvature vanishes.
  
Since $\al_P =0 $, the  local monodromies are unipotent whence trivial (unipotent and  unitary).
Indeed, if the local monodromy  is not unipotent, then the growth of the  norm of the generating holomorphic sections, defined by using the multivalued flat sections, 
is not logarithmic, so for some of them it is greater than a constant times 
$|t_P|^{-a_P} (\log |t_p|)^{b_P}$ with $a_P > 0$, contradicting that the sum of the $a_P$'s equals $\al_P =0 $.
 
By Theorem~\ref{sum}, there is a holomorphic subbundle $U^o_C \cong (U'_C)_{C^o}$ such that 
$V^o_C = W^o_C \oplus^{\perp} U^o_C$ on $C^o$.
Since $U^o_C$ is unitary flat with respect to the given hermitian metric, and with trivial local monodromies
there is a locally free sheaf with a unitary flat connection $U_C$ on $C$ which 
is an extension of $U^o_C$.    
By the same argument as in the proof of Theorem~\ref{main}, we obtain a locally free sheaf $U$ on $Y$
which is unitary flat.

We have to prove that $U$ is a direct summand of $V$.
By the same argument applied to $f': X \to Y'$, we obtain a locally free sheaf $U_{Y'}$ on $Y'$
which is unitary flat with respect to the given metric.
Since $u^*U$ and $U_{Y'}$ are defined as extensions of the same unitary flat bundles on $Y^o$, 
we have $u^*U = U_{Y'}$.
This time, since $V'$ is a subbundle of the canonical extension defined by using the local monodromies, 
$U_{Y'}$ is a direct summand of $V'$; $V' = U_{Y'} \oplus^{\perp} W_{Y'}$.
By taking the direct image by $u$, we have 
\[
U \oplus^{\perp} u_*W_{Y'} \subset V \subset U \oplus^{\perp} (u_*W_{Y'})^{\vee \vee}.
\]
Therefore we have $V = U \oplus^{\perp} W$.

\qed 
  
\begin{proof}[Proof of Proposition~\ref{orb}]
In the general case, we  construct the following  bimeromorphically Cartesian diagram
using \cite{K1}~Theorem~17:

\begin{equation*}
\xymatrix{
X'' \ar[d]^{f''}\ar[r]^{v'} & X \ar[d]^{f'}\ar\ar[r]^{Id}  & X\ar[d]^f\\
Y'' \ar[r]^{u'} &Y'  \ar[r]^u & Y ,}
\end{equation*}

\noindent
where $ X'', Y', Y''$ are all smooth,  such that 

(i)  $u$ is birational, and the degeneracy locus of $f'$ is contained in a  simple normal crossing divisor $D'$ on $Y'$.

(ii) $u'$ is a finite surjective morphism with Abelian Galois group $G$,  
and $f'' : X'' \ra Y''$ satisfies the hypotheses of Theorem \ref{main}.

Denote by $V'',  V', V$ the respective images of the relative canonical sheaves.
By \cite{viehweg} or by the definition of the canonical extension, 
we have a generically invertible homomorphism 
between the  two vector bundles of the same rank
$V'' \ra u' {^*} (V')$.
Thus we have an injective homomorphism $(u'_*V'')^G \to V'$ between locally free sheaves 
which is generically surjective.

By Theorem~\ref{main}, 
there is an orthogonal splitting $V'' = U'' \oplus W''$, where $U''$ is unitary flat and $W''$ 
is generically ample.
Let $U'{^{\text{orb}}}$ be smallest subsheaf of $V'$ containing $(u'_*U'')^G$ and such that 
$V'/U'{^{\text{orb}}}$ is torsion free, i.e. the hull of $(u'_*U'')^G$ in $V'$.
By the definition of the canonical extension, $U'{^{\text{orb}}}$ coincides with the canonical extension of 
$(u'_*U'')^G \vert_{Y' \setminus D'}$, hence a locally free sheaf.
We finish the  proof  by taking $U^{\text{orb}}$ to be the hull of $u_*(u'_*U'')^G$ in $V$.
\end{proof}

%%%%%%%%%%%%%%%%%%%%%%%%%%%%%%%%%%%%%%%%%%

\end{document}